\newtheorem{theorem}{Theorem}[section]
\newtheorem{lemma}{Lemma}[section]
\newtheorem{remark}{Remark}[section]
\newlist{notes}{enumerate}{1}
\setlist[notes]{label=Note: ,leftmargin=*}
\DeclareMathOperator{\diam}{diam}
\DeclareMathOperator{\spec}{spec}
\title{ \textbf{When Are Standard Graph Products Isomorphic?}}
\author{Priti Prasanna Mondal$^{1, 3}$, M. Rajesh Kannan $^{2,}$$^{*}$, Fouzul Atik$^{1}$ \\
        \small $^{1}$Department of Mathematics, SRM University-AP, Andhra Pradesh - 522240, India \\
        \small $^{2}$Department of Mathematics, Indian Institute of Technology Hyderabad, Hyderabad 502285, India \\
        \small $^{3}$Department of Mathematics, SGGS Institute of Engineering and Technology, Maharashtra 431606, India \\
        \small Emails: \tt{pritiprasanna1992@gmail.com; rajeshkannan1.m@gmail.com; fouzulatik@gmail.com} \\
        \small $^{*}$Corresponding author: \tt{rajeshkannan1.m@gmail.com} \\
        }
\date{}
\begin{document}
\maketitle{}

\begin{abstract}
 This article investigates the isomorphism problem for graphs derived from the four standard graph products: Cartesian, Kronecker (direct), strong, and lexicographic product. We provide a complete characterization of all simple connected graphs for which their corresponding products are isomorphic. As a  by-product, we identify a novel family of non-distance-regular graphs that possess fewer than d+1 distinct distance eigenvalues, where d represents the diameter of the graph. This result offers a new perspective on Problem 4.3 posed in \cite{atik}, moving beyond the current approaches.
\end{abstract}

{\bf AMS Subject Classification(2010):} 05C76, 05C50.

\textbf{Keywords:} Graph products, Graph isomorphism, Graph spectrum, Distance-regular graphs.

\section{Introduction}

Graph products form an important area of study in graph theory and are frequently used to construct new families of graphs from existing ones. Let ${G}$ and ${H}$ be two graphs with vertex sets $V({G})=\{ x_{1}, x_{2}, \ldots, x_{n}\}$ and $V({H})=\{ y_{1}, y_{2}, \ldots, y_{m}\}$ respectively. In most cases, the graph product of two graphs ${G}$ and ${H}$ is a new graph whose vertex set is $V({G}) \times V({H})$, the Cartesian product of the sets $V({G})$ and $V({H})$. In this article, we focus on four standard graph products, as named in \cite{prod-hand-book}: the Cartesian, Kronecker, Strong, and Lexicographic products. 
\begin{itemize}
    \item The Cartesian product of graphs was studied by Whitehead and Russell (1912) and later formalized by Sabidussi (1960) \cite{sabi-cart}.  The Cartesian product of the graphs ${G}$ and ${H}$, denoted as ${G} \Box {H}$, is a graph in which $(x_{i}, y_{j})\sim(x_{r},y_{s})$ if either ($x_{i} \sim x_{r}$ in ${G}$ and $y_{j} = y_{s}$) or ($x_{i} = x_{r}$ and $y_{j} \sim y_{s}$ in ${H}$).

    \item In 1962, Weichesel \cite{weichesel} introduced the notion of the Kronecker product of graphs. The Kronecker product of the graphs ${G}$ and ${H}$, denoted as ${G}\otimes {H}$, is a graph in which $(x_{i}, y_{j})\sim(x_{r},y_{s})$ if $x_{i} \sim x_{r}$ in ${G}$ and $y_{j} \sim y_{s}$ in ${H}$. 

    \item The Strong product of the graphs ${G}$ and ${H}$, denoted as ${G}\boxtimes {H}$, is a graph in which $(x_{i}, y_{j})\sim(x_{r},y_{s})$ if either ($x_{i} \sim x_{r}$ in ${G}$ and $y_{j} = y_{s}$) or ($x_{i} = x_{r}$ and $y_{j} \sim y_{s}$ in ${H}$) or ($x_{i} \sim x_{r}$ in ${G}$ and $y_{j} \sim y_{s}$ in ${H})$. This notion was introduced by Sabidussi \cite{sabi-cart}.
    
    \item In \cite{harary-lexi}, Harary introduced the lexicographic product of two graphs. The lexicographic product of the graphs ${G}$ and ${H}$, denoted as ${G}\circ {H}$, is a graph in which $(x_{i}, y_{j})\sim(x_{r},y_{s})$ if $x_{i} \sim x_{r}$ in ${G}$ or $x_{i} = x_{r} ~\mbox{and}~ y_{j} \sim y_{s}$ in ${H}$.
\end{itemize} 
 Graph isomorphism is a fascinating and powerful tool with wide-ranging applications in various fields. It involves studying the problem of identifying symmetries in graphs and, more generally, in combinatorial structures. Let ${G}=(V, E)$ be a graph with the vertex set $V$ and edge set $E$. Two graphs ${G}_{1}=(V_{1},E_{1})$ and ${G}_{2}=(V_{2},E_{2})$ are isomorphic if there is a bijection $\varphi$ from $V_{1}$ to $V_{2}$ such that $\{v,w\} \in E_{1} $  if and only if $\{\varphi(v), \varphi(w)\}\in E_{2}.$ 
It is natural to ask the following question regarding standard graph products: \begin{quote}
    For which graphs $G$ and $H$ are the graphs resulting from two different standard products isomorphic? 
\end{quote}In fact, \cite[Exercise 5.1]{prod-hand-book} poses a specific instance of this question:
    Show that the graphs ${C_{2n+1}} \Box {C_{2n+1}}$ and ${C_{2n+1}} \otimes {C_{2n+1}}$ are isomorphic, where $C_{2n+1}$ denotes the odd cycle of length $2n+1$. What changes if we replace odd cycles with even cycles? 
Motivated by such questions, we consider all pairs among the four standard graph products and provide a complete characterization of the graphs $G$ and $H$ for which the resulting graphs are isomorphic.

In \cite{biggs-agt}, Biggs introduced the distance-regular graphs, which are graphs with a lot of combinatorial symmetry. For a vertex $x$ in $V({G})$, let ${G}_{i}(x)$ denote the set of all vertices in ${G}$ that are at a distance $i$ from $x$. A connected graph $G$ is called \emph{distance regular} if it is regular and for any two vertices $x, y \in V({G})$ at distance $i$, there are precisely $c_i$ neighbors of $y$ in ${G}_{i-1}(x)$ and $b_{i}$ neighbors of $y$ in ${G}_{i+1}(x) $.  That is, for the distance regular graph, if we consider any two pairs of vertices as $(x, y)$ and $(x, z) $ such that $d(x, y) = i = d(x, z) $ for $y \neq z,$ then the number $c_{i}^{y} $ for $y$ and the number $c_{i}^{z}$ for $z$ must be equal. That is,  $c_{i}^{y}=c_{i}^{z}$. Similarly, $b_{i}^{y}=b_{i}^{z}$. For more details, refer to \cite{dist-reg-book-bcn, biggs-agt, dist-reg-survey}.  

The distance matrix of a connected graph ${G}$ is denoted by $D({G})$ and defined as $D({G})=(d_{ij})_{n\times n}$, where $d_{ij}$ is the length (number of edges) of the shortest path between vertices $i$ and $j$. In \cite{distance-spec-LAA}, Lin et al. posed the following problem: If $G$ is a connected graph with diameter at least $d$, then $G$ has at least $d+1$ distinct distance eigenvalues. This question was negatively answered by Atik and Panigrahi in \cite{atik}, where they constructed counterexamples. In the same paper, they further asked whether there exist connected graphs other than distance-regular graphs with diameter $d$ and fewer than $d+1$ distinct distance eigenvalues. This question was partially addressed in \cite{ aalipour2016distance, atik2}, where the authors presented examples. 
In this article, using one of the isomorphism characterizations, we construct an infinite family of non-distance-regular graphs of diameter $d$ with fewer than $d+1$ distinct distance eigenvalues (Remark \ref{iso}).

\section{Some preliminary results}
In this section, we recall some known results that will be used in the latter part of the paper. These results pertain to the degrees, connectivity, and diameter, as well as the adjacency and distance spectra of product graphs.

\begin{theorem}\emph{\cite{prod-hand-book, imrich2000product}}\label{degProd}
    For any two graphs ${G}$ and ${H}$, let $x\in V({G})$ and $y\in V({H})$. Then for any vertex $(x, y)$ in the product graphs,  we have the following: 
    \begin{enumerate}
        \item[(a)] $\deg_{{G} \Box {H}}((x,y))=\deg_{{G}}(x)+\deg_{{H}}(y),$
        \item[(b)] $\deg_{{G} \otimes {H}}((x,y))=\deg_{{G}}(x)\cdot\deg_{{H}}(y),$
        \item[(c)] $\deg_{{G} \boxtimes {H}}((x,y))=(\deg_{{G}}(x)+1)\cdot(\deg_{{H}}(y)+1)-1,$
        \item[(d)] $\deg_{{G} \circ {H}}((x,y))=|V({H})|\cdot\deg_{{G}}(x)+\deg_{{H}}(y).$
    \end{enumerate}
     
\end{theorem}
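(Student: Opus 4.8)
The plan is to read the degree of a vertex $(x,y)$ in each product directly off the adjacency rule stated in the Introduction, by counting the candidate neighbors $(x_r,y_s)$ and partitioning them according to which coordinates coincide and which are adjacent. The entire argument is combinatorial bookkeeping, so the only genuine care required is to check that the cases used for counting are mutually exclusive (no edge double-counted) and exhaustive (every edge counted). Throughout I would use that $G$ and $H$ are simple, so that no vertex is adjacent to itself.

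For part (a), a neighbor of $(x,y)$ in $G\Box H$ is of exactly one of two types: either $y_s=y$ and $x_r\sim x$, or $x_r=x$ and $y_s\sim y$. These are disjoint (the first moves only the first coordinate, the second only the second), contributing $\deg_G(x)$ and $\deg_H(y)$ neighbors respectively, whence the sum. For part (b), a neighbor in $G\otimes H$ requires $x_r\sim x$ and $y_s\sim y$ simultaneously; the two choices are independent, so the count is $\deg_G(x)\cdot\deg_H(y)$. For part (c), I would observe that the edge set of $G\boxtimes H$ is the disjoint union of the Cartesian edges and the Kronecker edges: a Cartesian-type edge has one coordinate fixed, while a Kronecker-type edge moves both coordinates by adjacency, so no edge is of both types. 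Adding the counts from (a) and (b) gives $\deg_G(x)+\deg_H(y)+\deg_G(x)\deg_H(y)$, which factors as $(\deg_G(x)+1)(\deg_H(y)+1)-1$. For part (d), I would split on the first coordinate: if $x_r\sim x$, then $y_s$ ranges over all of $V(H)$, contributing $|V(H)|\cdot\deg_G(x)$ neighbors; if $x_r=x$, then we need $y_s\sim y$, contributing $\deg_H(y)$. Since $G$ is simple, $x_r\sim x$ and $x_r=x$ cannot hold together, so these cases are disjoint and their sum yields the formula.

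The main (indeed only) obstacle is verifying disjointness and exhaustiveness in the strong and lexicographic cases, where the adjacency condition is a disjunction of clauses that could a priori overlap. Simplicity of the factors rules out the overlaps, so once that is recorded the counts add cleanly and the four identities follow at once.
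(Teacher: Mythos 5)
This theorem is quoted in the paper from the references (the product-graph handbook and Imrich--Klav\v{z}ar) as a preliminary result; the paper itself contains no proof of it, so there is nothing internal to compare your argument against. Your proof is correct and complete: in each of the four cases the neighbor count is read directly off the defining adjacency rule, and the one point that genuinely requires care---that the disjunctive clauses in the strong and lexicographic products do not overlap---is exactly what you address, using simplicity of the factors (e.g.\ $x_r\sim x$ and $x_r=x$ cannot hold simultaneously, so Cartesian-type and Kronecker-type edges in $G\boxtimes H$ are disjoint, and the two cases in $G\circ H$ are disjoint). The resulting counts $\deg_G(x)+\deg_H(y)$, $\deg_G(x)\deg_H(y)$, $\deg_G(x)+\deg_H(y)+\deg_G(x)\deg_H(y)=(\deg_G(x)+1)(\deg_H(y)+1)-1$, and $|V(H)|\deg_G(x)+\deg_H(y)$ all follow as you state; this is the standard argument for these degree formulas.
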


The following results concern the connectedness of product graphs.\begin{lemma} \emph{(\cite{imri}, Lemma 1.2)} \label{connecCert}
    A Cartesian product ${G} \Box {H}$ is connected if and only if both factors are connected.
\end{lemma}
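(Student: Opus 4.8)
The plan is to prove both directions of the biconditional. For the forward direction, I would argue the contrapositive: if at least one factor is disconnected, then $G \Box H$ is disconnected. Suppose without loss of generality that $G$ is disconnected, so $V(G)$ partitions into nonempty sets $A$ and $B$ with no edges of $G$ between them. I would then show that no vertex of the form $(a, y)$ with $a \in A$ can be joined by a path in $G \Box H$ to any vertex $(b, y')$ with $b \in B$. The key observation is the structure of adjacency in the Cartesian product: in any single edge of $G \Box H$, either the $G$-coordinate stays fixed or the $H$-coordinate stays fixed, and when the $G$-coordinate changes it changes along an edge of $G$. Hence along any walk in $G \Box H$, the sequence of first coordinates forms a walk in $G$ (with possible repetitions), so it can never cross from $A$ to $B$. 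This shows the two sets $A \times V(H)$ and $B \times V(H)$ lie in different components, so $G \Box H$ is disconnected.

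For the reverse direction, assume both $G$ and $H$ are connected, and I would construct an explicit path between any two vertices $(x_1, y_1)$ and $(x_2, y_2)$ of $G \Box H$. The natural idea is to move in the two coordinates one at a time. Since $G$ is connected, there is a path $x_1 = u_0, u_1, \ldots, u_k = x_2$ in $G$; lifting this to $G \Box H$ while holding the second coordinate fixed at $y_1$ gives a path from $(x_1, y_1)$ to $(x_2, y_1)$, because each consecutive pair $(u_{i}, y_1), (u_{i+1}, y_1)$ is an edge by the first clause of the Cartesian adjacency rule. Similarly, connectedness of $H$ yields a path from $(x_2, y_1)$ to $(x_2, y_2)$ holding the first coordinate fixed at $x_2$. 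Concatenating these two paths connects the arbitrary pair, so $G \Box H$ is connected.

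The main structural point carrying the forward direction is the projection argument: projecting a walk in $G \Box H$ onto its first coordinate yields a walk in $G$ (after deleting stationary steps), which is precisely why a disconnected factor forces a disconnected product. I expect this to be the only genuinely delicate step, since it requires noting that the Cartesian product forbids simultaneous changes in both coordinates along a single edge; the reverse direction is a routine concatenation of lifted paths. Everything else is bookkeeping, and the two directions together establish the stated equivalence.
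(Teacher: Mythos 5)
Your proof is correct. Note that the paper itself gives no proof of this lemma---it is quoted as a known result from the literature (Imrich--Klav\v{z}ar, Lemma 1.2)---so there is nothing to compare against; your argument (projecting walks onto the first coordinate to show a disconnected factor forces a disconnected product, and concatenating lifted paths for the converse) is the standard proof of this fact and is sound as written.
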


\begin{theorem}\emph{\cite{weichesel}} \label{connecKron}
Let ${G}={G}_{1}\otimes {G}_{2}$ be the Kronecker product of simple connected graphs ${G}_{1}$ and ${G}_{2}$. Then ${G}$ is connected if and only if either ${G}_{1}$ or ${G}_{2}$  contains an odd cycle. If both ${G}_{1}$ and ${G}_{2}$ are bipartite, then ${G}={G}_{1}\otimes {G}_{2}$ has exactly two components.
\end{theorem}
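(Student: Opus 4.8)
The plan is to recast connectivity of the Kronecker product as a statement about the \emph{lengths} of walks in the two factors, and then to control those lengths through bipartiteness. The basic observation, immediate from the adjacency rule of $\otimes$, is that a sequence $(x^{(0)},y^{(0)}),(x^{(1)},y^{(1)}),\ldots,(x^{(\ell)},y^{(\ell)})$ is a walk in $G_1\otimes G_2$ if and only if $x^{(0)}x^{(1)}\cdots x^{(\ell)}$ is a walk of length $\ell$ in $G_1$ and $y^{(0)}y^{(1)}\cdots y^{(\ell)}$ is a walk of the \emph{same} length $\ell$ in $G_2$. Consequently, $(x,y)$ and $(x',y')$ lie in one component of $G_1\otimes G_2$ if and only if there exists a single length $\ell$ admitting both an $x$--$x'$ walk of length $\ell$ in $G_1$ and a $y$--$y'$ walk of length $\ell$ in $G_2$.

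I would then isolate the key walk-length lemma. In any connected graph, a $u$--$v$ walk can be lengthened by $2$ by traversing an incident edge back and forth, so every length of one fixed parity, from $d(u,v)$ upward, is realizable. If the graph is \emph{bipartite}, this is the complete picture: every $u$--$v$ walk has length $\equiv d(u,v)\pmod 2$, determined by whether $u$ and $v$ lie in the same part. If the graph is \emph{non-bipartite}, it contains an odd closed walk; splicing this closed walk into a $u$--$v$ walk reverses the length-parity, so \emph{both} parities occur, and hence there is a threshold $N$ beyond which every length is realizable between any pair of vertices.

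For the forward direction, suppose without loss of generality that $G_1$ is non-bipartite, i.e.\ contains an odd cycle. Given arbitrary $(x,y)$ and $(x',y')$, I would fix any $y$--$y'$ walk in the connected graph $G_2$, of length $\ell_0$, which may be enlarged in steps of $2$ indefinitely. By the lemma, $G_1$ admits an $x$--$x'$ walk of every sufficiently large length; choosing $k$ with $\ell_0+2k\ge N$ yields a common length $\ell=\ell_0+2k$ realized in both factors, hence a walk joining $(x,y)$ and $(x',y')$ in $G_1\otimes G_2$. Thus the product is connected.

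For the converse and the precise count, assume both factors are bipartite, encode their parts by $\epsilon_1:V(G_1)\to\{0,1\}$ and $\epsilon_2:V(G_2)\to\{0,1\}$, and define the coloring $c(x,y)=\epsilon_1(x)+\epsilon_2(y)\pmod 2$. Along any product edge both $\epsilon_1$ and $\epsilon_2$ flip, so $c$ is constant on edges and therefore on each component; since each factor (being connected with an edge) realizes both values of its $\epsilon$, both colors occur, forcing at least two components. To get exactly two, I would verify that each color class is connected: two vertices share a color precisely when $d_{G_1}(x,x')\equiv d_{G_2}(y,y')\pmod 2$, and then the bipartite half of the lemma supplies walks in both factors of a common length (large enough and of the correct parity), producing a connecting walk in the product. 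The main obstacle is entirely the walk-length lemma, and specifically the non-bipartite case — cleanly arguing that an odd closed walk lets one realize all large lengths of both parities between every pair of vertices; once that is secured, both the connectivity criterion and the count of exactly two components follow from the parity bookkeeping above.
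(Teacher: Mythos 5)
Your proposal is correct, but note that the paper itself offers no proof of this statement: it is quoted as a preliminary result, attributed to Weichsel \cite{weichesel}, so there is no internal argument to compare against. What you have written is essentially a faithful reconstruction of Weichsel's classical proof: the observation that walks in $G_1\otimes G_2$ project to walks of \emph{equal length} in the two factors, the walk-length lemma (parity is rigid in bipartite factors, and an odd closed walk spliced into a $u$--$v$ walk makes all sufficiently large lengths of both parities realizable), and the parity coloring $c(x,y)=\epsilon_1(x)+\epsilon_2(y)\pmod 2$ that is invariant along product edges and whose two color classes are each shown to be connected via a common walk length $\max\{d_{G_1}(x,x'),d_{G_2}(y,y')\}$. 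All steps are sound; the one point to make explicit is the nontriviality hypothesis you use only parenthetically: each factor must contain at least one edge (if, say, $G_1=K_1$, then $G_1\otimes G_2$ is edgeless and the conclusion fails), which is the standing assumption in Weichsel's formulation and is implicit in the paper's use of the theorem, where the factors are always cycles. With that hypothesis stated, your argument is a complete and correct proof, and arguably a useful addition given that the paper leaves the result as a citation.
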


The following results are about the diameter of the product graphs.
\begin{theorem}\emph{\cite{imri}}\label{diamCart}
    For two simple connected graph ${G}$ and ${H}$, the diameter of the Cartesian product ${G} \Box {H}$ is $\diam({G} \Box {H})= \diam({G}) + \diam({H}).$
\end{theorem}

\begin{theorem}\emph{(\cite{hudiameter}, Corollary 3.9)}\label{diamKron}
Let ${C}_{m}$ be an odd cycle and ${H}$ be a connected graph with order $n$ and diameter $r \ge 1.$
\begin{enumerate}
    \item[(a)] If ${H}$ is bipartite, then $\diam({C}_{m}\otimes {H})=\max\{m,r\}$. Thus, $\diam({C}_{m}\otimes {P}_{n})= \max\{m,n-1\}$  and $\diam({C}_{m}\otimes {C}_{n})=\max\{m,\frac{n}{2}\}$ if $n$ is even.
    \item[(b)] If ${H}={C}_{n}$ and $n$ is odd, then 
\begin{eqnarray*}
 \diam({C}_{m} \otimes {C}_{n})=
     \left\{ \begin{array}{cc} 
        m-1 & if~~ m=n; \\
        \max\{n,\frac{m-1}{2}\} & if ~~ m>n; \\
\max\{m,\frac{n-1}{2}\} & if ~~ m<n.
\end{array} \right.
    \end{eqnarray*}
\end{enumerate}
\end{theorem}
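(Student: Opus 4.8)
The plan is to reduce all distances in a Kronecker product to a purely combinatorial question about the lengths of walks in the two factors, and then optimise. The starting point is the standard fact that $(x,y)$ and $(x',y')$ are joined by a walk of length $\ell$ in $G\otimes H$ if and only if $x,x'$ are joined by a walk of length $\ell$ in $G$ and $y,y'$ are joined by a walk of length $\ell$ in $H$; this is immediate from the definition, since each step must advance both coordinates simultaneously. Consequently
\[
 d_{G\otimes H}\big((x,y),(x',y')\big)=\min\{\ell : \ell\in W_G(x,x')\cap W_H(y,y')\},
\]
where $W_G(u,v)$ denotes the set of lengths of walks from $u$ to $v$. So the first step is to record $W$ for the relevant factors. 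For an odd cycle $C_k$, writing $d=d_{C_k}(u,v)$, one checks that $W_{C_k}(u,v)=\{d+2i:i\ge 0\}\cup\{(k-d)+2i:i\ge 0\}$: walks of the same parity as $d$ have every length $\ge d$, while the shortest walk of the opposite parity runs the long way around, of length $k-d$ (oddness of $k$ makes $d$ and $k-d$ opposite in parity). For bipartite $H$ with $d=d_H(y,y')$, every walk has the parity of $d$ and $W_H(y,y')=\{d+2i:i\ge 0\}$.

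For part (a), since $W_H(y,y')$ consists of a single parity $p$, the intersection forces $\ell\equiv p$, and the minimal such length in $W_{C_m}(x,x')$ is $d_{C_m}(x,x')$ or $m-d_{C_m}(x,x')$ according to whether this distance has parity $p$ or not; call it $L_p$. Intersecting the two arithmetic progressions of common parity $p$ gives $d_{C_m\otimes H}=\max\{L_p,d_H(y,y')\}$. Since $L_p\le m$ and $d_H(y,y')\le r$ always, every distance is at most $\max\{m,r\}$; conversely the value $r$ is realised by a diametral pair of $H$ together with $x=x'$ (if $r$ is even) or an edge of $C_m$ (if $r$ is odd), and the value $m$ is realised by taking $x=x'$ with an edge $y\sim y'$, which forces the opposite (odd) parity and hence $L_p=m$. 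This proves $\diam(C_m\otimes H)=\max\{m,r\}$, and specialising $H=P_n$ and $H=C_n$ ($n$ even) gives the two stated consequences. Connectedness of the product, needed for the diameter to be finite, is guaranteed by Theorem~\ref{connecKron} because $C_m$ is non-bipartite.

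For part (b) both factors are odd cycles, so $W$ now carries both parities and the analysis is genuinely two-sided. Writing $a=d_{C_m}(x,x')\le\tfrac{m-1}{2}$ and $b=d_{C_n}(y,y')\le\tfrac{n-1}{2}$, the minimal walk length of parity $q$ is $a$ or $m-a$ in the first factor and $b$ or $n-b$ in the second, so
\[
 d_{C_m\otimes C_n}\big((x,y),(x',y')\big)=\min_{q\in\{0,1\}}\max\{A_q,B_q\},
\]
where $\{A_0,A_1\}=\{a,m-a\}$ and $\{B_0,B_1\}=\{b,n-b\}$, with the pairing dictated by the parities of $a$ and $b$. I would then split according to whether $a$ and $b$ have equal or opposite parity: in the equal-parity case the expression is $\min\{\max(a,b),\max(m-a,n-b)\}$, and in the opposite-parity case it is $\min\{\max(a,n-b),\max(m-a,b)\}$. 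Maximising each over the admissible ranges of $a,b$ yields the three regimes: the equal-parity branch never exceeds $\max\{\tfrac{m-1}{2},\tfrac{n-1}{2}\}$, while the opposite-parity branch produces the wrap-around terms $n$ and $m$. Matching these against explicit optimal pairs (for instance $a=0,\ b=1$ when $m=n$) gives $m-1$, $\max\{n,\tfrac{m-1}{2}\}$, and $\max\{m,\tfrac{n-1}{2}\}$ in the cases $m=n$, $m>n$, $m<n$ respectively.

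I expect the one delicate point to be the optimisation in part (b): one must keep careful track of the parity constraint that couples the two factors (it is exactly this coupling that separates the three regimes and is responsible for the asymmetric answer when $m\ne n$), and must verify both the upper bound (no pair beats the claimed value) and its attainment (an explicit extremal pair) in each case. The remaining steps — the walk-length descriptions and the bipartite computation in (a) — are routine once the product-distance identity is in hand.
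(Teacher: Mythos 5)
Your proposal is correct, but there is nothing in the paper to compare it against: the paper does not prove this statement at all. It is quoted as a known preliminary result, attributed to Corollary 3.9 of the cited reference \cite{hudiameter}, and is used later only as an ingredient (via Remark \ref{iso}) to compute the diameter of $C_n \otimes C_n$. So your argument should be judged as a self-contained proof, and as such it holds up. The walk-length identity for the Kronecker product (a walk of length $\ell$ in $G\otimes H$ projects to, and is assembled from, walks of length $\ell$ in each factor) is the standard correct starting point; your descriptions of the walk-length sets $W_{C_k}(u,v)=\{d+2i\}\cup\{(k-d)+2i\}$ for odd $k$ and $W_H(y,y')=\{d+2i\}$ for bipartite $H$ are both right, and the resulting distance formula $\min_q \max\{A_q,B_q\}$ in part (b) is exact. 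I checked the optimisation you leave partially sketched: in the equal-parity branch the value is $\max(a,b)\le\max\{\tfrac{m-1}{2},\tfrac{n-1}{2}\}$ since $\max(m-a,n-b)$ always dominates $\max(a,b)$; in the opposite-parity branch with $m>n$ the value reduces to $\min\{\max(a,n-b),\,m-a\}$, which is bounded by $\max\{n,\tfrac{m-1}{2}\}$ and attains $n$ at $(a,b)=(1,0)$ (using $m\ge n+2$, so $\min\{n,m-1\}=n$), while for $m=n$ the branch gives $m-\max(a,b)$, maximised at $m-1$ by $(a,b)=(0,1)$. These match the three regimes in the statement, so the only caveat is that your part (b) is a sketch whose details need to be written out, not a gap in the method.
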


Let $G=(V, E)$ be a simple graph with $n$ vertices. The adjacency matrix of $G$, denoted by $A(G)$, is the $n\times n$ matrix defined as follows. The rows and the columns of $A(G)$ are indexed by $V(G)$. If $i\neq j$ then the $(i, j)$ entry of $A(G)$ is $0$ for vertices $i$ and $j$ non-adjacent, and the $(i, j)$ entry is $1$ for $i$ and $j$ adjacent. The $(i,i)$ entry of $A(G)$ is $0$ for $i=1,\cdots, n.$  The set of all eigenvalues of the adjacency matrix of the graph $G$ is denoted by $ \spec({G})$.
The next result is about the adjacency spectrum of the Cartesian product and the Kronecker product of two graphs. 
\begin{theorem}\emph{\cite{bapat}}\label{specCart}
    Let ${G}$ and ${H}$ be graphs with $n$ and $m$ vertices, respectively. Let $\spec ({G}) = \{ \lambda_{1}, \cdots, \lambda_{n} \}$ and $\spec ({H}) = \{\mu_{1}, \cdots, \mu_{m} \}$ be the eigenvalues of the adjacency matrices of ${G}$ and ${H}$, respectively. Then, 
    
    \begin{enumerate}
        \item[(a)]  $\spec ({G}\Box{H}) = \{\lambda_{i}+\mu_{j} : i=1, \cdots, n;~ j=1, \cdots, m\}.$
        \item[(b)]  $\spec({G}\otimes{H}) = \{ \lambda_{i}\mu_{j}: i=1, \cdots, n;~ j=1, \cdots, m\}.$
    \end{enumerate} 
\end{theorem}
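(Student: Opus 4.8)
The plan is to reduce both spectral statements to the eigenvalue theory of the matrix Kronecker (tensor) product, which I will denote by $\otimes$ on the level of matrices (the same symbol as the graph Kronecker product, but acting on matrices). First I would fix orthonormal eigenbases: since $A(G)$ and $A(H)$ are real symmetric, there exist orthonormal bases $\{v_1, \ldots, v_n\}$ of $\mathbb{R}^n$ and $\{w_1, \ldots, w_m\}$ of $\mathbb{R}^m$ with $A(G)\,v_i = \lambda_i v_i$ and $A(H)\,w_j = \mu_j w_j$.

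The first key step is to identify the adjacency matrices of the two products as tensor expressions. Ordering the vertex set $V(G) \times V(H)$ lexicographically, one checks directly from the adjacency rules that $A(G \Box H) = A(G) \otimes I_m + I_n \otimes A(H)$ and $A(G \otimes H) = A(G) \otimes A(H)$, where $I_n, I_m$ are identity matrices. For the Cartesian case, the two summands record exactly the two disjoint ways a pair $(x_i, y_j) \sim (x_r, y_s)$ can occur (agree in one coordinate, adjacent in the other); for the graph Kronecker case, the single product term encodes the simultaneous adjacency requirement in both coordinates.

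The second step applies the mixed-product rule for tensors. For any vectors $v, w$ one has $(A(G) \otimes A(H))(v \otimes w) = (A(G)\,v) \otimes (A(H)\,w)$, so each $v_i \otimes w_j$ is an eigenvector of $A(G \otimes H)$ with eigenvalue $\lambda_i \mu_j$. Likewise $(A(G) \otimes I_m + I_n \otimes A(H))(v_i \otimes w_j) = \lambda_i (v_i \otimes w_j) + \mu_j (v_i \otimes w_j) = (\lambda_i + \mu_j)(v_i \otimes w_j)$, so the same family of vectors diagonalizes $A(G \Box H)$ with eigenvalues $\lambda_i + \mu_j$.

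The final step is a completeness and multiplicity argument. The $nm$ vectors $\{v_i \otimes w_j\}$ are orthonormal in $\mathbb{R}^{nm}$, since $\langle v_i \otimes w_j,\, v_k \otimes w_\ell \rangle = \langle v_i, v_k \rangle\,\langle w_j, w_\ell \rangle = \delta_{ik}\delta_{j\ell}$; hence they form a complete eigenbasis, and the multisets $\{\lambda_i + \mu_j\}$ and $\{\lambda_i \mu_j\}$ exhaust all $nm$ eigenvalues with their correct multiplicities, establishing (a) and (b). I expect the main obstacle to be the bookkeeping of the first step, namely verifying the two matrix identities carefully against the combinatorial adjacency definitions and fixing a consistent vertex ordering so that the Kronecker block structure is transparent; once these identities are in hand, the spectral conclusions follow immediately from the mixed-product rule and the orthonormality of the tensored eigenbasis.
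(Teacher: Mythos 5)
Your proof is correct, and it is the standard argument: the paper itself does not prove Theorem \ref{specCart} at all --- it is quoted as a known preliminary result from \cite{bapat}, where precisely this Kronecker-product argument (writing $A(G \Box H) = A(G) \otimes I_m + I_n \otimes A(H)$ and $A(G \otimes H) = A(G) \otimes A(H)$, then diagonalizing with the tensored eigenbasis $v_i \otimes w_j$) is the proof given. Since your identities, the mixed-product rule, and the completeness/multiplicity argument via orthonormality are all sound, there is nothing to correct.
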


Let $G$ be a connected graph. The set of all eigenvalues of the distance matrix of the graph $G$ is denoted by $ \spec_{D}({G})$. The \emph{transmission $Tr(v)$ of a vertex $v$} of $G$ is defined to be the sum of the distances from $v$ to all other vertices in $G$. That is, $ Tr(v) = \sum_{u\in V} d(u,v).$ A connected graph $G$ is said to $s$-transmission regular if $Tr(v) = s$ for every vertex $ v \in  V.$
   The following result is about the distance eigenvalues of the Cartesian product of transmission regular graphs.
\begin{theorem}\emph{\cite{indu2}}\label{Cartesian}
    Let ${G}$ and ${H}$ be  transmission regular graphs on $m$
and $n$ vertices with transmission regularity $k$ and $t$, respectively. Let $ \spec_{D}({G}) = \{ s, \mu_{2}, \mu_{3}, \cdots, \mu_{m}\}$ and $ \spec_{D}({H}) = \{ t, \eta_{2}, \eta_{3}, \cdots, \eta_{n}\}$. Then the distance spectrum of the Cartesian product of ${G}$ and ${H}$ is given by $$ \spec_{D}({G} \Box {H}) = \{ ns+pt, n\mu_{i}, m\eta_{j}, 0 \} $$ where $i = 2, 3, \cdots, m; j = 2, 3, \cdots, n $ and $0$ is with multiplicity $(m-1)(n-1)$.
\end{theorem}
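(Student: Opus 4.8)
The plan is to realize the distance matrix of the Cartesian product as a simple combination of Kronecker products and then diagonalize it explicitly using the transmission regularity of the factors. The starting point is the additive distance formula for the Cartesian product: for $(x_i,y_j),(x_r,y_s)\in V(G\Box H)$ one has $d_{G\Box H}((x_i,y_j),(x_r,y_s)) = d_G(x_i,x_r)+d_H(y_j,y_s)$, which is precisely the per-pair refinement underlying the diameter statement in Theorem \ref{diamCart}. Ordering the vertices of $G\Box H$ lexicographically by the pairs $(i,j)$, this formula translates into the matrix identity
\begin{equation*}
D(G\Box H) = D(G)\otimes J_n + J_m\otimes D(H),
\end{equation*}
where $\otimes$ denotes the Kronecker product of matrices and $J_m,J_n$ are the all-ones matrices of the indicated orders. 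The first term records the $G$-distances (constant across the $H$-coordinate, hence the $J_n$ factor) and the second records the $H$-distances.

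Next I would exploit transmission regularity. Since $G$ is $s$-transmission regular, every row sum of $D(G)$ equals $s$, so $D(G)\mathbf{1}_m = s\mathbf{1}_m$; that is, $\mathbf{1}_m$ is the Perron eigenvector of $D(G)$ with eigenvalue $s$, and similarly $D(H)\mathbf{1}_n = t\mathbf{1}_n$. Because $D(G)$ is real symmetric, I may choose an orthonormal eigenbasis $\{u_1=\tfrac{1}{\sqrt m}\mathbf{1}_m,\, u_2,\dots,u_m\}$ with $D(G)u_i=\mu_i u_i$, and since $u_i\perp\mathbf{1}_m$ for $i\ge 2$ we obtain the crucial annihilation $J_m u_i = \mathbf{1}_m(\mathbf{1}_m^{\top}u_i)=0$. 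An analogous orthonormal eigenbasis $\{v_1,\dots,v_n\}$ of $D(H)$ satisfies $J_n v_j=0$ for $j\ge 2$.

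The heart of the argument is a direct computation on the tensor-product vectors $u_i\otimes v_j$, using the mixed-product rule $(A\otimes B)(u\otimes v)=(Au)\otimes(Bv)$. Splitting into four cases according to whether each index is $1$ or $\ge 2$ yields: for $(i,j)=(1,1)$ the eigenvalue $ns+mt$; for $i\ge 2,\,j=1$ the eigenvalue $n\mu_i$ (the $J_m u_i=0$ term drops out); for $i=1,\,j\ge 2$ the eigenvalue $m\eta_j$; and for $i\ge 2,\,j\ge 2$ the eigenvalue $0$, since both surviving terms carry a factor $J_m u_i$ or $J_n v_j$. As $\{u_i\otimes v_j\}$ is an orthonormal basis of $\mathbb{R}^{mn}$, these account for all $mn$ eigenvalues, with $0$ occurring exactly $(m-1)(n-1)$ times, giving the claimed spectrum.

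I do not expect a serious obstacle; the computation is routine once the Kronecker decomposition is in place. The only points requiring care are verifying the additive distance formula (so that the matrix identity is exact rather than a mere inequality), and justifying the orthogonality $u_i\perp\mathbf{1}_m$ for $i\ge2$ without assuming that $s$ is a simple eigenvalue. The latter is guaranteed by the symmetry of $D(G)$, which lets us select the non-Perron eigenvectors inside $\mathbf{1}_m^{\perp}$. Everything else is bookkeeping, and the four-case split makes the final tally transparent.
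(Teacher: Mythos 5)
Your proof is correct, and it is the standard argument for this result: realize $D(G\Box H)=D(G)\otimes J_n+J_m\otimes D(H)$ via the additivity of distances in the Cartesian product, then diagonalize on the tensor basis $\{u_i\otimes v_j\}$, using that transmission regularity makes the all-ones vector an eigenvector of each distance matrix, so that symmetry lets you place the remaining eigenvectors in $\mathbf{1}^{\perp}$ and obtain the annihilation $J_m u_i=0$, $J_n v_j=0$. There is, however, no proof in the paper to compare against: Theorem \ref{Cartesian} is stated as a preliminary quoted from \cite{indu2}, and your argument is essentially a reconstruction of the proof in that reference. A useful by-product of your computation is that it resolves two typos in the statement as printed: the transmission regularity of $G$, denoted $k$ in the hypothesis, must coincide with the Perron distance eigenvalue $s$ appearing in $\spec_D(G)$, and the eigenvalue written ``$ns+pt$'' should read $ns+mt$. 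The one step you flag but do not prove, the formula $d_{G\Box H}((x,y),(x',y'))=d_G(x,x')+d_H(y,y')$, is a standard fact (see \cite{prod-hand-book, imri}) and treating it as known is consistent with how the paper itself uses such results.
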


\section{Main Results }


In this section, we present the main results of this paper. Specifically, we establish the following: Let $G$ and $H$ be two simple connected graphs (with at least two vertices in some results).

\begin{itemize}
    \item The Cartesian product and the Kronecker product of $G$ and $H$ are isomorphic if and only if $G$ and $H$ are odd cycles of the same length (Theorem \ref{isomor}). 
    \item The Cartesian product ${G} \Box {H}$ and the strong product ${G} \boxtimes {H}$ are not isomorphic (Theorem \ref{cart-strong}).
    \item  The Kronecker product ${G}\otimes {H}$ is not isomorphic to the Strong product ${G} \boxtimes {H}$ (Theorem \ref{kron-lexi}).
    \item  The strong product ${G}\boxtimes {H}$ is isomorphic to the lexicographic product ${G} \circ {H}$ if and only if ${H}$ is a complete graph (Theorem \ref{lexi-strong}).
\end{itemize}

\subsection*{$\blacksquare$ Cartesian and Kronecker Products of Graphs}
As mentioned in the introduction, the following question appears in \cite[Exercise 5.1]{prod-hand-book}:
\begin{quote}
Show that the graphs ${C_{2n+1}} \Box {C_{2n+1}}$ and ${C_{2n+1}} \otimes {C_{2n+1}}$ are isomorphic, where $C_{2n+1}$ denotes the odd cycle of length $2n+1$. What changes if we replace odd cycles with even cycles?
\end{quote}
In this section, we completely characterize the graphs for which the Cartesian product and the Kronecker product are isomorphic. For the sake of completeness, we also provide a detailed proof of the isomorphism between ${G} \Box {H}$ and ${G} \otimes {H}$ in the case where both ${G}$ and ${H}$ are odd cycles.
\begin{theorem}\label{isomor}
Let ${G}$ and ${H}$ be simple connected graphs. Then, the graphs ${G} \Box {H}$ and ${G}\otimes {H}$ are isomorphic if and only if ${G}$ and ${H}$ are odd cycles of the same length.
\end{theorem}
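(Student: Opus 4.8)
The plan is to prove the two directions separately: sufficiency by an explicit isomorphism, and necessity by successively eliminating all other possibilities using degree, connectivity, and spectral invariants. For sufficiency, write $m=2n+1$ and identify $V(C_m)$ with $\mathbb{Z}_m$, so that $i\sim j$ in $C_m$ exactly when $i-j\equiv\pm1\pmod m$. I would then exhibit the map $\varphi\colon\mathbb{Z}_m\times\mathbb{Z}_m\to\mathbb{Z}_m\times\mathbb{Z}_m$, $\varphi(a,b)=(a+b,\,a-b)$. Since $m$ is odd, $2$ is invertible modulo $m$, so $\varphi$ is a bijection, and a direct check shows that the four Cartesian neighbours $(a\pm1,b)$ and $(a,b\pm1)$ of $(a,b)$ are sent to $\varphi(a,b)\pm(1,1)$ and $\varphi(a,b)\pm(1,-1)$, which are precisely the four Kronecker neighbours of $\varphi(a,b)$ in $C_m\otimes C_m$. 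Hence $\varphi$ is an isomorphism $C_m\Box C_m\to C_m\otimes C_m$, which settles the ``if'' direction (and the cited exercise).

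For necessity, the first step reduces the problem to cycles. If $G\Box H\cong G\otimes H$, then their degree sequences agree as multisets; by Theorem \ref{degProd} these are $\{d_G(x)+d_H(y)\}$ and $\{d_G(x)\,d_H(y)\}$. Comparing the largest and smallest entries gives $\Delta_G+\Delta_H=\Delta_G\Delta_H$ and $\delta_G+\delta_H=\delta_G\delta_H$, where $\Delta,\delta$ denote maximal and minimal degrees. Since $(t-1)(s-1)=1$ forces $t=s=2$ for positive integers, both $G$ and $H$ are $2$-regular, hence, being connected, cycles $G=C_p$ and $H=C_q$ with $p,q\ge 3$. It remains to show that $C_p\Box C_q\cong C_p\otimes C_q$ forces $p=q$ with both odd.

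The even cases fall to coarse invariants. If $p$ and $q$ are both even, then $C_p\otimes C_q$ is disconnected by Theorem \ref{connecKron}, whereas $C_p\Box C_q$ is connected by Lemma \ref{connecCert}, so no isomorphism exists. If exactly one of $p,q$ is even, then $C_p\otimes C_q$ is bipartite, since a proper $2$-colouring can be read off the coordinate lying in the even cycle, while $C_p\Box C_q$ is not bipartite; equivalently, by Theorem \ref{specCart} the minimum eigenvalue $-4$ occurs in the Kronecker product but not in the Cartesian one. Either way these two graphs cannot be isomorphic.

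The main obstacle is the remaining case where $p$ and $q$ are both odd, because here every coarse invariant coincides: both products are connected, non-bipartite, and $4$-regular on $pq$ vertices, and by Theorems \ref{diamCart} and \ref{diamKron} they even share the same diameter in the delicate subcase $p=q+2$. I would therefore pass to the second largest adjacency eigenvalue, computed via Theorem \ref{specCart}. Writing the eigenvalues of $C_p\otimes C_q$ as $4\cos(2\pi a/p)\cos(2\pi b/q)$ and those of $C_p\Box C_q$ as $2\cos(2\pi a/p)+2\cos(2\pi b/q)$, and assuming $p\ge q$, the Cartesian second largest value equals $2+2\cos(2\pi/p)=4\cos^2(\pi/p)$, while the Kronecker second largest value equals $\max\{4\cos(2\pi/p),\,4\cos(\pi/p)\cos(\pi/q)\}$, the two candidates arising from the smallest indices and from the ``most negative'' middle indices $a=(p-1)/2$, $b=(q-1)/2$ respectively. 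Equating the two second largest eigenvalues, the first candidate forces $\cos(2\pi/p)=1$, which is impossible, and the second forces $\cos(\pi/p)=\cos(\pi/q)$, i.e. $p=q$; hence $p>q$ cannot occur and $p=q$ with both odd. The technical heart, which I expect to demand the most care, is verifying that these two expressions really are the second largest eigenvalues, that is, that no other product $4\cos(2\pi a/p)\cos(2\pi b/q)$ lies strictly between them and $4$, which reduces to elementary but careful monotonicity estimates on cosines.
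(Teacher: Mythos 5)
Your proposal is correct, and its overall skeleton—an explicit linear bijection for sufficiency, then elimination of all other pairs by degree, connectivity, and adjacency-spectrum invariants—is the same as the paper's; the differences lie in which invariant is deployed where. Your map $\varphi(a,b)=(a+b,\,a-b)$ is, up to a translation, exactly the paper's $f_n$, and your reduction to cycles via $(\Delta_G-1)(\Delta_H-1)=1$ and $(\delta_G-1)(\delta_H-1)=1$ is a cleaner packaging of the paper's Case I; the both-even case (connectivity) is identical. In the mixed-parity case you argue via bipartiteness of $C_p\otimes C_q$ versus non-bipartiteness of $C_p\Box C_q$, which is more elementary than the paper's comparison of smallest eigenvalues, and your parenthetical eigenvalue-$(-4)$ remark is essentially the paper's argument. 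The genuine divergence is the case of two odd cycles of different lengths: you compare \emph{second-largest} adjacency eigenvalues, which obliges you to prove that $4\cos^2(\pi/p)$ and $\max\{4\cos(2\pi/p),\,4\cos(\pi/p)\cos(\pi/q)\}$ really are the second-largest values—the "technical heart" you flag (your identification of the candidates is correct, so this can be completed, but it does require the monotonicity bookkeeping you anticipate). The paper instead compares \emph{smallest} eigenvalues: the minimum of $\spec(C_p\Box C_q)$ is $\lambda_{\min}(C_p)+\lambda_{\min}(C_q)$, while the minimum of $\spec(C_p\otimes C_q)$ is $2\lambda_{\min}(C_p)$ or $2\lambda_{\min}(C_q)$ (a product of two negatives being positive, the most negative product pairs the most negative eigenvalue with $2$), and $\lambda+\mu=2\lambda$ forces $\mu=\lambda$, hence $p=q$, with no extremality analysis beyond identifying the minimum. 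So the paper's choice of invariant buys a one-line finish exactly where your route needs its most delicate estimates; your route, in exchange, gives the slightly more elementary bipartiteness treatment of the mixed case.
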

\begin{proof}
Let $n \geq 3$ be an odd integer, and ${C}_{n}$ be the cycle graph with vertices labeled with the integers $1, \dots, n$ in cyclic order.
First let us prove that the graph ${C}_{n}\otimes {C}_{n}$ is isomorphic to ${C}_{n} \Box {C}_{n}$.  Define the function $f_{n}:V({C}_{n} \Box {C}_{n})\rightarrow V({C}_{n}\otimes {C}_{n})$ as follows:  $$f_{n}(l,m)=\left((l+m-1)\mbox{(mod $n$)}, (1-l+m) \mbox{(mod $n$)}\right),$$ for $(l,m)\in V({C}_{n} \Box {C}_{n})$ and $ ~1\leq l, m \leq n$. This map $f_n$ is a bijection. For,
 \begin{eqnarray*}
&&f_{n}(l,m)=f_{n}(i,j) ~~~ \mbox{for some}~~~ 1\leq l, m, i, j\leq n\\
 &\implies & \left((l+m-1)\mbox{(mod $n$)}, (1-l+m)\mbox{(mod $n$)}\right)=((i+j-1)\mbox{(mod $n$)}, (1-i+j)\mbox{(mod $n$)}) \\
 &\implies & (l-i)\mbox{(mod $n$)} = (j-m) \mbox{(mod $n$)}, ~\mbox{and}~
 -(l-i)\mbox{(mod $n$)} = (j-m)\mbox{(mod $n$)}\\
 &\implies & (l-i)\mbox{(mod $n$)} = -(l-i) \mbox{(mod $n$)}\\
 &\implies & 2l\mbox{(mod $n$)} = 2i \mbox{(mod $n$)}\\
 &\implies & l\mbox{(mod $n$)} = i \mbox{(mod $n$)},[\mbox{as $n$ is odd}] \\
&\implies & l = i, ~\mbox{as}~ 1\leq l, i \leq n.
 \end{eqnarray*}
Also, $-(l-i)\mbox{(mod $n$)} = (j-m) \mbox{(mod $n$)}$ implies $m=j$. 
Therefore,  for $1\leq l, m, i, j\leq n $, we have  $l=i$ and $m=j$. Thus $f_{n}(l,m)=f_{n}(i,j) $.
Hence, $f_{n}$ is an injective map. As the number of vertices in both graphs is the same, the map $f_n$ is a bijection.

Next, we show $f_{n}$ is an isomorphism. Let the vertices  $(l,m)$ and $(i,j)$ be adjacent in the graph ${C}_{n} \Box {C}_{n}$. Then either $l=i$ and $ m \sim j$ in ${C}_{n}$ or $ l \sim i$ in ${C}_{n}$ and $ m=j.$  Let us first assume that $(l,m) \sim (i,j)$ with $l=i $ and $ j\sim m $ in ${C}_{n}.$ Then $ j = (m\pm 1) (\mbox{mod}~n).$
Note that, by definition, $f_{n}(l,m)=\left((l+m-1)\mbox{(mod $n$)}, (1-l+m) \mbox{(mod $n$)}\right)$ and \begin{align*}
f_{n}(i,j)&= \left((i+j-1)\mbox{(mod $n$)}, (1-i+j) \mbox{(mod $n$)}\right)\\
&=\left((l+m-1\pm 1)\mbox{(mod $n$)}, (1-l+m \pm 1) \mbox{(mod $n$)}\right).
\end{align*}
Note that, $$(l+m-1)\mbox{(mod n)} - (l+m-1 \pm 1) \mbox{(mod n)} = \pm 1\mbox{(mod n)} $$ \mbox{and} 
 $$ ~~~(1-l+m)\mbox{(mod n) }- (1-l+m \pm 1) \mbox{(mod n)} = \pm 1 \mbox{(mod n)}.$$
    
That is, the vertices $$ \left((l+m-1)\mbox{(mod n)}, (1-l+m) \mbox{(mod n)}\right)$$ and $$  \left((l+m-1\pm 1)\mbox{(mod n)}, (1-l+m \pm 1) \mbox{(mod n)}\right)$$ are adjacent.  Thus,
the vertices $ f_{n}(l,m)$ and $ f_{n}(i,j)$ are adjacent in $ {C}_{n} \otimes {C}_{n}$.

Proof of the case $(l,m) \sim (i,j) $ with $l \sim i$ in ${C}_{n}$ and $ m = j$ is similar.  Thus, the graphs ${C}_{n} \Box {C}_{n} $ and $ {C}_{n} \otimes {C}_{n}$ are isomorphic, when $n$ is odd and $n \ge 3$.

Conversely, we now show that if ${G}$ and ${H}$ are not odd cycles of the same length, then the graphs ${G} \Box {H}$ and ${G} \otimes {H}$ are not isomorphic. To establish this, we examine the following four cases:

\textbf{Case I:} Suppose that  at least one of the graphs ${G}$ or ${H}$ is not a cycle graph. Let ${G}$ not be a cycle. Let $\deg_{{G}}(1)\leq \deg_{{G}}(2)\leq \cdots \leq \deg_{{G}}(n)$ be the degree sequence of ${G}$, where $n$ is the order of the graph ${G}$. Then either $ \deg_{{G}}(1)$ or $ \deg_{{G}}(n)$ is not equal to $2$. Let  $\deg_{{H}}(1)\leq \deg_{{H}}(2)\leq \cdots \leq \deg_{{H}}(m)$ be the degree sequence of ${H}$, where $m$ is the order of the graph ${H}$. 

Note that,  by Theorem \ref{degProd}, the minimum degree of the graph ${G} \Box {H}$ is $$\delta_{{G} \Box {H}}=\delta_{{G}}+\delta_{{H}}=\deg_{{G}}(1)+\deg_{{H}}(1),$$  and  the minimum degree of the graph ${G} \otimes {H}$ is $$\delta_{{G} \otimes {H}}=\delta_{{G}} \delta_{{H}}=\deg_{{G}}(1)\deg_{{H}}(1).$$ 
If $\deg_{{G}} (1) \neq 2$, then $\deg_{{G}}(1)+\deg_{{H}}(1)\neq \deg_{{G}}(1)\deg_{{H}}(1)$. This implies $\delta_{{G} \Box {H}}\neq \delta_{{G} \otimes {H}}$. 

The maximum degree of the graph ${G} \Box {H}$ is $$\Delta_{{G} \Box {H}}=\Delta_{{G}}+\Delta_{{H}}=\deg_{{G}}(n)+\deg_{{H}}(m),$$ and, by Theorem \ref{degProd}, the maximum degree of the graph ${G} \otimes {H}$ is $$\Delta_{{G} \otimes {H}}=\Delta_{{G}} \Delta_{{H}}=\deg_{{G}}(n)\deg_{{H}}(m).$$ If $\deg_{{G}}(n) \neq 2$, then $\deg_{{G}}(n)+\deg_{{H}}(m)\neq \deg_{{G}}(n)\deg_{{H}}(m)$. This implies $\Delta_{{G} \Box {H}}\neq \Delta_{{G} \otimes {H}}$. 

Thus, if one of the graphs is not a cycle graph, then either the minimum degree or the maximum degree of the graphs  ${G} \Box {H}$ and $ {G} \otimes {H}$ are not equal. Hence,  the graphs ${G} \Box {H}$ and $ {G} \otimes {H}$ are not isomorphic. 

\textbf{Case II:} Let ${G}$ and ${H}$ be both even cycles. Then ${G} \Box {H}$ is a connected graph (by Theorem \ref{connecCert}),  whereas ${G} \otimes {H}$ is a disconnected graph (by Theorem \ref{connecKron}). So the graphs ${G} \Box {H}$ and $ {G} \otimes {H} $ are not isomorphic.

\textbf{Case III:} Let ${G}$ be an even cycle and ${H}$ be an odd cycle. Let ${G}={C}_{2n}$ and ${H}={C}_{2m+1}$. The eigenvalues of the cycle graph ${C}_{p}$ (by Theorem \ref{specCycl}) are $\lambda_{j}= 2\cos(\frac{2\pi}{p}j);$ for $j=0, 1, 2, \cdots, p-1.$  Note that  $$\lambda_{p-j}= 2\cos(\frac{2\pi}{p}(p-j))=2\cos(\frac{2\pi}{p}j)=\lambda_{j}$$ for $j=0, 1, 2, \cdots, p-1.$ So, if $p=2n$ then the distinct eigenvalues of Cycle graph are $\lambda_{j}= 2\cos(\frac{2\pi}{2n}j);$ for $j=0, 1, 2, \cdots, n.$ Since $\cos x$ is the strictly decreasing function on $[0, \pi]$, so $\lambda_{n}^{{G}}= 2\cos(\pi)=-2$  is the smallest eigenvalue of the graph ${G}={C}_{2n}$. Similarly, $\lambda_{m}^{{H}}= 2\cos(\frac{2m\pi}{2m+1})\neq -2$ is the smallest eigenvalues of the graph ${H}={C}_{2m+1}$. 

Let  $$\lambda_{0}^{{G}}=2>\lambda_{1}^{{G}}> \lambda_{2}^{{G}} > \cdots > \lambda_{n}^{{G}}=-2$$ be the distinct adjacency eigenvalues of  ${G}$,  and   $$\lambda_{0}^{{H}}=2>\lambda_{1}^{{H}}> \lambda_{2}^{{H}} > \cdots > \lambda_{m}^{{H}}\neq -2$$ be the distinct adjacency eigenvalues of ${H}$. Then, by Theorem \ref{specCart},  the smallest adjacency eigenvalue of ${G} \Box {H}$ is $-2 + \lambda_{m}^{{H}}$ and that of ${G} \otimes {H}$ is either $-4$ or $ 2\lambda_{m}^{{H}}$. Since $\lambda_{m}^{{H}} \neq -2 $, we have $-2 + \lambda_{m}^{{H}} \notin   \{-4, 2\lambda_{m}^{{H}}\}.$ 
 Thus the graphs ${G} \Box {H}$ and ${G} \otimes {H}$ are not isomorphic.

\textbf{Case IV:} Let ${G}$ and ${H}$ be  odd cycles of different length. Let ${G}={C}_{2n+1}$ and ${H}={C}_{2m+1}$ where $n\neq m$. 
Similar to the Case III, let
 $\lambda_{n}^{{G}}= 2\cos(\frac{2n\pi}{2n+1})$ and $\lambda_{m}^{{H}}= 2\cos(\frac{2m\pi}{2m+1})$ be the smallest eigenvalues of the graphs ${G}={C}_{2n+1}$ and ${H}={C}_{2m+1}$ respectively. Then, for $n\neq m$, we have $\lambda_{n}^{{G}} \neq\lambda_{m}^{{H}}.$
 The smallest adjacency eigenvalue of ${G} \Box {H}$ is $\lambda_{n}^{{G}} + \lambda_{m}^{{H}},$ and that of ${G} \otimes {H}$ is either $2\lambda_{n}^{{G}}$ or $2\lambda_{m}^{{H}}$. Since $\lambda_{n}^{{G}}\neq \lambda_{m}^{{H}}$ for $n\neq m$,  we have $\lambda_{n}^{{G}} + \lambda_{m}^{{H}} \notin \{2\lambda_{n}^{{G}}, 
  2\lambda_{m}^{{H}}\}$. Thus, the graphs ${G} \Box {H}$ and ${G} \otimes {H}$ are not isomorphic. 
\end{proof}
Next, using the previous theorem, we will show that the graph ${C}_n\otimes {C}_n$ is not distance-regular and that the number of distinct distance eigenvalues is less than the diameter plus one. We begin with the following well-known result.
\begin{theorem}\emph{\cite{bapat}}\label{specCycl}
    For $n\ge2,$ the eigenvalues of the cycle graph ${C}_{n}$ are $2\cos(\frac{2\pi k}{n}), ~k=1, \cdots, n.$
\end{theorem}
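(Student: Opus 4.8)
The plan is to exploit the circulant structure of the adjacency matrix of $C_n$. Labeling the vertices $0, 1, \ldots, n-1$ in cyclic order, vertex $j$ is adjacent precisely to $j-1$ and $j+1$ modulo $n$, so $A(C_n)$ is the circulant matrix whose first row is $(0,1,0,\ldots,0,1)$. The key idea is to produce an explicit eigenbasis from the $n$th roots of unity rather than to compute a characteristic polynomial directly.

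Concretely, I would set $\omega = e^{2\pi i/n}$ and, for each $k \in \{0,1,\ldots,n-1\}$, define the vector $v_k = (1, \omega^k, \omega^{2k}, \ldots, \omega^{(n-1)k})^{T}$. First I would compute the $j$th coordinate of $A(C_n)\, v_k$: since the $j$th row of $A(C_n)$ has ones exactly in positions $j-1$ and $j+1$ (indices taken mod $n$), this coordinate equals $\omega^{(j-1)k} + \omega^{(j+1)k} = (\omega^{-k} + \omega^{k})\,\omega^{jk}$. Using $\omega^{k} + \omega^{-k} = 2\cos(2\pi k/n)$, this is exactly $2\cos(2\pi k/n)$ times the $j$th coordinate of $v_k$. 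Hence $A(C_n)\, v_k = 2\cos(2\pi k/n)\, v_k$, so each $v_k$ is an eigenvector with eigenvalue $2\cos(2\pi k/n)$.

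It then remains to argue that these account for the entire spectrum. The vectors $v_0, v_1, \ldots, v_{n-1}$ are the columns of the Vandermonde matrix built from the distinct nodes $1, \omega, \ldots, \omega^{n-1}$, hence they are linearly independent and form a basis of $\mathbb{C}^n$. Therefore the $n$ values $2\cos(2\pi k/n)$, listed with multiplicity over $k = 0,\ldots,n-1$ (equivalently over $k = 1,\ldots,n$, since $k$ and $n-k$ yield the same cosine), constitute the complete multiset of eigenvalues of $A(C_n)$.

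I do not anticipate a genuine obstacle here, as the computation is routine once the circulant viewpoint is adopted. The only points requiring care are the bookkeeping of adjacency indices modulo $n$, so that the ``wrap-around'' entry joining vertex $0$ to vertex $n-1$ is handled uniformly with the interior rows, and the observation that the stated list is a complete multiset of eigenvalues rather than merely the set of distinct ones (a distinction that matters in the applications above, where the multiplicities of the extreme eigenvalues are implicitly used).
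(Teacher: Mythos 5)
Your proof is correct. The paper does not prove this statement at all---it is quoted from \cite{bapat} as a known result---and your circulant-matrix argument (eigenvectors $v_k$ built from the $n$th roots of unity, eigenvalues $\omega^k+\omega^{-k}=2\cos(2\pi k/n)$, completeness via Vandermonde independence) is precisely the standard proof given in that reference, so there is nothing to reconcile; your closing remark that the list is the full multiset of eigenvalues, not just the distinct values, is indeed the form in which the paper later uses the result.
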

\begin{theorem}\label{CnKCn}
     Let  
     $\lambda_{1}>\lambda_{2}\ge \lambda_{3}\ge \cdots \ge \lambda_n$ be the distance eigenvalues of the cycle graph ${C}_{n}$,  where $n \ge 3$ is odd. Then the distance eigenvalues of ${C}_{n}\otimes {C}_{n}$ are $ 2n\lambda_{1}, n\lambda_{i}$ with multiplicity $2$, and $0$ with multiplicity $(n-1)^{2},$ where $i=2,3,\cdots, n.$
\end{theorem}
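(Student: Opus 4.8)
The plan is to sidestep the genuinely unpleasant object, the distance matrix of the Kronecker product, by transporting the computation to the Cartesian product via the isomorphism just proved, and then reading the answer off Theorem \ref{Cartesian}. Since $n$ is odd, Theorem \ref{isomor} gives ${C}_n \otimes {C}_n \cong {C}_n \Box {C}_n$. A graph isomorphism preserves adjacency and hence every pairwise shortest-path distance, so the two distance matrices are permutation-similar and share the same spectrum. It therefore suffices to compute $\spec_{D}({C}_n \Box {C}_n)$, for which a closed form is already available.

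The one preliminary fact I would establish is that ${C}_n$ is transmission regular. This is immediate from vertex-transitivity: the cyclic rotation is an automorphism, so every vertex has the same distance profile and thus a common transmission $s$. Consequently the all-ones vector is an eigenvector of $D({C}_n)$ with eigenvalue $s$, and since $D({C}_n)$ is nonnegative and irreducible (the graph is connected), Perron--Frobenius guarantees that this is the simple, strictly largest distance eigenvalue; that is, $s = \lambda_1$ with $\lambda_1 > \lambda_2$, matching the normalization in the statement.

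Now I would apply Theorem \ref{Cartesian} with both factors equal to ${C}_n$, so that in the theorem's notation the vertex count is $m = n$ and both transmission regularities coincide, $s = t = \lambda_1$. Reading off the four eigenvalue types: the principal eigenvalue becomes $ns + mt = n\lambda_1 + n\lambda_1 = 2n\lambda_1$; the scaled non-principal eigenvalues $n\mu_i$ coming from the first factor and $n\eta_j$ from the second factor are both exactly $n\lambda_i$ for $i = 2,\ldots,n$, so each such value appears with twice its multiplicity in ${C}_n$; and the eigenvalue $0$ appears with multiplicity $(m-1)(n-1) = (n-1)^2$. A dimension check, $1 + 2(n-1) + (n-1)^2 = n^2 = |V({C}_n \otimes {C}_n)|$, confirms that all eigenvalues have been accounted for, which is exactly the claimed list.

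There is no essential obstacle here: the statement is a clean corollary of the isomorphism together with Theorem \ref{Cartesian}. The only points demanding care are the notational specialization of Theorem \ref{Cartesian} to two identical factors (where both the order and the transmission regularity coincide, and the stray ``$pt$'' in its statement should be read as $mt$), and the correct bookkeeping of multiplicities, ensuring that the $n-1$ non-principal eigenvalues $\lambda_2,\ldots,\lambda_n$ are each \emph{doubled} rather than collapsed.
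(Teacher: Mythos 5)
Your proposal is correct and follows exactly the paper's own route: the paper's proof is the single line ``follows from Theorem \ref{isomor} and Theorem \ref{Cartesian},'' which is precisely your plan of transporting the spectrum through the isomorphism ${C}_n \otimes {C}_n \cong {C}_n \Box {C}_n$ and then specializing Theorem \ref{Cartesian} to two identical transmission-regular factors. Your added details (vertex-transitivity giving transmission regularity, Perron--Frobenius identifying $\lambda_1$, the multiplicity doubling, and the dimension check) simply make explicit what the paper leaves to the reader.
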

\begin{proof}
    The proof follows from Theorem \ref{isomor} and Theorem \ref{Cartesian}.
\end{proof}

\begin{figure}[!ht]
   \begin {center}
    \includegraphics[width=3in]{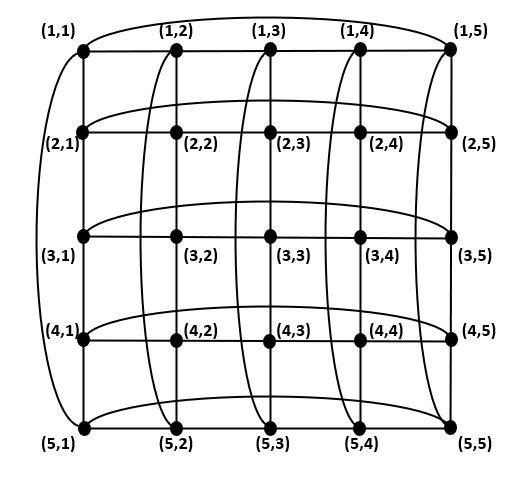}
    \caption{ Graph $ C_5 \Box C_5$}
    \label{cart}
    \end{center}
\end{figure}

 Consider the graph $ {C}_{5} \Box {C}_{5} $ (see Figure \ref{cart}). Let $x=(1,1), y=(1,3), z=(5,5) $. Then $d(x,y)=2 $ and $d(x,z)=2 $.  It is easy to see that $c_2^{y}=1$ and $c_2^{z}=2$. Thus, the graph $ {C}_{5} \Box {C}_{5} $ is not distance-regular. By extending the above argument, we show that the graph $ {C}_{2n+1} \Box {C}_{2n+1}$ is not distance regular. Consider the vertices $x=(1,1), y=(1,3), z=(2n+1, 2n+1) $. Then $c_2^{y}=1$ and $c_2^{z}=2$. Thus  $c_2^{y}\neq c_2^{z}$, and hence $ {C}_{2n+1} \Box {C}_{2n+1} $ is not distance regular graph.

\begin{remark}\label{iso}
    \emph{Let $n$ be an odd integer. The diameter of the graph ${C}_n\otimes {C}_n$ is $n-1$ (by Corollary \ref{diamKron}). The number of distinct distance eigenvalues of ${C}_{n}$ is $\frac{n+1}{2}$, and  the number of distinct distance eigenvalues of ${C}_n\otimes {C}_n$ is $\frac{n+1}{2}+1 = \frac{n+3}{2}$. Since $\frac{n+3}{2}<n$ for any $n\geq 5,$ so the number of distinct distance eigenvalues of ${C}_n\otimes {C}_n$ is less than the diameter plus one. Therefore, when $n$ is odd, ${C}_n\otimes {C}_n$ forms an infinite family of non-distance-regular graphs of diameter $d$ with fewer than $d + 1$ distinct distance eigenvalues. This is another significant observation that contributes to addressing the Problem 4.3 posed in \cite{atik}, which asks: ``Are there connected graphs other than distance-regular graphs with diameter $d$ and having less than $d+1$ distinct distance eigenvalues?"}
\end{remark}


\subsection*{$\blacksquare$ Cartesian and Strong Products of Graphs}
\begin{theorem}\label{cart-strong}
  Let ${G}$ and ${H}$ be simple connected graphs, each with at least two vertices. Then the Cartesian product ${G} \Box {H}$ and the strong product ${G} \boxtimes {H}$ are not isomorphic.
\end{theorem}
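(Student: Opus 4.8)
The plan is to separate the two products using a single degree invariant, exploiting the degree formulas already recorded in Theorem \ref{degProd}. The key observation is that since both $G$ and $H$ are connected and have at least two vertices, every vertex has degree at least one; in particular the maximum degrees satisfy $\Delta_G \ge 1$ and $\Delta_H \ge 1$.

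First I would compute the maximum degree of each product. By Theorem \ref{degProd}(a) the degree of $(x,y)$ in $G \Box H$ is $\deg_G(x) + \deg_H(y)$, which is maximized by taking $x$ and $y$ of maximum degree, so $\Delta_{G \Box H} = \Delta_G + \Delta_H$. By Theorem \ref{degProd}(c) the degree of $(x,y)$ in $G \boxtimes H$ is $(\deg_G(x)+1)(\deg_H(y)+1) - 1$; this is strictly increasing in each coordinate degree, so its maximum is $\Delta_{G \boxtimes H} = (\Delta_G+1)(\Delta_H+1) - 1 = \Delta_G\Delta_H + \Delta_G + \Delta_H$.

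Next I would simply compare the two. Subtracting gives $\Delta_{G \boxtimes H} - \Delta_{G \Box H} = \Delta_G \Delta_H$, which is strictly positive because $\Delta_G, \Delta_H \ge 1$. Hence $\Delta_{G \boxtimes H} > \Delta_{G \Box H}$. Since isomorphic graphs must share the same maximum degree, the Cartesian and strong products cannot be isomorphic, which is the claim.

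There is essentially no obstacle here: the only point requiring justification is that the hypotheses force $\Delta_G, \Delta_H \ge 1$, which is immediate from connectivity together with $|V| \ge 2$. If one prefers a global invariant over a local one, the identical conclusion follows from an edge count. On the common vertex set $V(G) \times V(H)$, the edge set of $G \boxtimes H$ is the disjoint union of the edge sets of $G \Box H$ (coordinates agreeing in one slot) and $G \otimes H$ (both coordinates differing); as each factor contains at least one edge, the Kronecker part is nonempty, so $G \boxtimes H$ has strictly more edges than $G \Box H$. Either route yields the result in a few lines.
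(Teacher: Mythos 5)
Your proof is correct and follows essentially the same route as the paper: both compare the maximum degrees $\Delta_{G \Box H} = \Delta_G + \Delta_H$ and $\Delta_{G \boxtimes H} = (\Delta_G+1)(\Delta_H+1)-1$ via Theorem \ref{degProd} and observe that their difference $\Delta_G\Delta_H$ is nonzero because connectivity and $|V|\ge 2$ force $\Delta_G, \Delta_H \ge 1$ (the paper phrases this as deriving the contradiction $\Delta_G\Delta_H = 0$ from assumed isomorphism, which is the same computation). Your alternative edge-counting argument, using that $E(G \boxtimes H)$ is the disjoint union of $E(G \Box H)$ and $E(G \otimes H)$ with the latter nonempty, is a valid and equally short second proof, though the paper does not take that route.
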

\begin{proof}
   Let $ \Delta_{{G}}$ and $\Delta_{{H}} $ denote the maximum vertex degrees of the graphs ${G}$ and ${H}$, respectively. Then, the maximum vertex degree of the graph ${G} \Box {H}$ is $\Delta_{{G}} + \Delta_{{H}}$, and the maximum vertex of the graph ${G} \boxtimes {H}$ is $(\Delta_{{G}} + 1) (\Delta_{{H}}+1) -1$. Suppose that the graphs $ {G} \Box {H} $ and ${G} \boxtimes {H}$ are isomorphic. Then their maximum degrees must be equal:   \[ \Delta_{{G}} + \Delta_{{H}} =(\Delta_{{G}} + 1) (\Delta_{{H}}+1) -1.\] This implies that $\Delta_{{G}}.\Delta_{{H}} =0$. That is, either $\Delta_{{G}} = 0$ or $\Delta_{{H}} = 0$, which is a contradiction. Thus, the graphs $ {G} \Box {H} $ and ${G} \boxtimes {H}$ are not isomorphic.
   \end{proof}

\subsection*{$\blacksquare$ Cartesian and Lexicographic Products of Graphs}
\begin{theorem}
   Let ${G}$ and ${H}$ be connected graphs, each with at least two vertices.  Then the Cartesian product ${G}\Box {H}$ and the Lexicographic product ${G} \circ {H}$ are not isomorphic.
\end{theorem}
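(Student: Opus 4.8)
The plan is to reuse the maximum-degree obstruction that settled the Cartesian-versus-strong case in Theorem~\ref{cart-strong}, since the lexicographic degree formula in Theorem~\ref{degProd}(d) is likewise linear in $\deg_{G}(x)$ but with a multiplicative factor of $|V(H)|$. First I would set $n=|V(G)|$ and $m=|V(H)|$ and note that both product graphs have exactly $nm$ vertices, so a vertex count alone yields no contradiction; the distinction must come from the local degree structure. By Theorem~\ref{degProd}(a), the maximum degree of $G\Box H$ is $\Delta_{G}+\Delta_{H}$, while Theorem~\ref{degProd}(d) gives the maximum degree of $G\circ H$ as $m\,\Delta_{G}+\Delta_{H}$, the maximum of $m\,\deg_{G}(x)+\deg_{H}(y)$ being attained by choosing a maximum-degree vertex in each factor independently (both coefficients being positive).

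Next, I would argue by contradiction. If $G\Box H\cong G\circ H$, then their maximum degrees must coincide, giving
\[
\Delta_{G}+\Delta_{H}=m\,\Delta_{G}+\Delta_{H},
\]
hence $\Delta_{G}(m-1)=0$. Since $H$ is connected with at least two vertices, $m\ge 2$, so $m-1\ge 1$ and therefore $\Delta_{G}=0$. But $G$ is also connected with at least two vertices, which forces $\Delta_{G}\ge 1$, a contradiction. I would conclude that $G\Box H$ and $G\circ H$ cannot be isomorphic.

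There is no genuine obstacle in this argument; the only point needing care is verifying that both hypotheses ``at least two vertices'' are actually used, and they are: one guarantees $m\ge 2$ so that the coefficient $m-1$ is nonzero, and the other guarantees $\Delta_{G}\ge 1$ so that the forced conclusion $\Delta_{G}=0$ is impossible. As a consistency check one could instead compare minimum degrees, $\delta_{G}+\delta_{H}$ versus $m\,\delta_{G}+\delta_{H}$, which produces the identical relation $\delta_{G}(m-1)=0$ and hence the same contradiction via $\delta_{G}\ge 1$; this confirms the conclusion is robust and not an artifact of the particular extremal statistic chosen.
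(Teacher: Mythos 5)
Your proposal is correct and follows essentially the same route as the paper: comparing the maximum degrees $\Delta_{G}+\Delta_{H}$ and $|V(H)|\,\Delta_{G}+\Delta_{H}$ and deriving a contradiction from the hypothesis that both factors are connected with at least two vertices. The only difference is cosmetic --- you conclude $\Delta_{G}(m-1)=0$ and rule out $\Delta_{G}=0$ explicitly, whereas the paper cancels $\Delta_{G}$ directly to get $|V(H)|=1$; your version is marginally more careful since it makes visible where connectivity of $G$ is used.
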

\begin{proof}
    The maximum vertex degree of ${G} \Box {H}$ is $\Delta_{{G}} + \Delta_{{H}}$, and the maximum vertex degree of  $ {G} \circ {H}$ is $|V({H})|\Delta_{{G}} + \Delta_{{H}}.$ Suppose that $ {G} \Box {H} $ is isomorphic to ${G} \circ {H}.$ Then   \[  \Delta_{{G}} + \Delta_{{H}} =\vert V(H)\vert \Delta_{{G}} + \Delta_{{H}}.\]
   That is, $\vert V(H) \vert  = 1$, which is a contradiction. So, $ {G} \Box {H} $ is not isomorphic to ${G} \circ {H}$.   
\end{proof}

\subsection*{$\blacksquare$ Kronecker and Strong Products of Graphs}
\begin{theorem}
   Let ${G}$ and ${H}$ be connected graphs, each with at least two vertices.  Then the Kronecker product ${G}\otimes {H}$ is not isomorphic to the Strong product ${G} \boxtimes {H}$.
\end{theorem}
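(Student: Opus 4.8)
The plan is to follow exactly the degree-based strategy used in the two preceding theorems, since it transfers here without change. First I would record the relevant maximum degrees using Theorem \ref{degProd}. By part (b), the maximum degree of the Kronecker product is $\Delta_{{G} \otimes {H}} = \Delta_{{G}} \cdot \Delta_{{H}}$, while by part (c) the maximum degree of the strong product is $\Delta_{{G} \boxtimes {H}} = (\Delta_{{G}} + 1)(\Delta_{{H}} + 1) - 1 = \Delta_{{G}} \Delta_{{H}} + \Delta_{{G}} + \Delta_{{H}}$.

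Next I would argue by contradiction. Suppose ${G} \otimes {H}$ and ${G} \boxtimes {H}$ are isomorphic. Both products have the same vertex set $V({G}) \times V({H})$, so the vertex counts automatically agree and offer no obstruction; the decisive invariant is the maximum degree. Since isomorphic graphs have equal maximum degrees, equating the two expressions gives $\Delta_{{G}} \Delta_{{H}} = \Delta_{{G}} \Delta_{{H}} + \Delta_{{G}} + \Delta_{{H}}$, which simplifies immediately to $\Delta_{{G}} + \Delta_{{H}} = 0$, forcing $\Delta_{{G}} = \Delta_{{H}} = 0$.

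The final step is to close the contradiction using the hypotheses. Because ${G}$ and ${H}$ are connected and each has at least two vertices, each contains at least one edge, so $\Delta_{{G}} \ge 1$ and $\Delta_{{H}} \ge 1$. This contradicts $\Delta_{{G}} = \Delta_{{H}} = 0$, so the two products cannot be isomorphic.

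There is no genuinely hard step: the entire content lies in the algebraic identity $(\Delta_{{G}}+1)(\Delta_{{H}}+1) - 1 = \Delta_{{G}}\Delta_{{H}} + \Delta_{{G}} + \Delta_{{H}}$, and the ``at least two vertices, connected'' hypothesis is precisely what guarantees the nonzero maximum degrees needed to finish. If one preferred an argument independent of connectivity, one could instead observe that the strong product contains the Kronecker product as a spanning subgraph and strictly adds the Cartesian edges whenever both factors have an edge, so it has strictly more edges; but the maximum-degree route is the cleanest and matches the style of the surrounding results.
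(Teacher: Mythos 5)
Your proposal is correct and follows essentially the same route as the paper: both equate the maximum degrees $\Delta_{G}\Delta_{H}$ and $(\Delta_{G}+1)(\Delta_{H}+1)-1$ via Theorem \ref{degProd}, deduce $\Delta_{G}+\Delta_{H}=0$, and contradict the connectivity hypothesis. Your write-up is in fact slightly more careful than the paper's, since you explicitly justify why $\Delta_{G},\Delta_{H}\geq 1$.
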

\begin{proof}
    The maximum degree of $ {G} \otimes {H}$ is $\Delta_{G}\cdot \Delta_{H}$, and the maximum degree of $ {G} \boxtimes {H}$ is $(\Delta_{G} + 1) (\Delta_{H}+1) -1.$ Suppose that the graphs $ {G} \otimes {H} $ are ${G} \boxtimes {H}$ isomorphic. Then 
   \[ \Delta_{{G}}. \Delta_{{H}} =(\Delta_{{G}} + 1) (\Delta_{{H}}+1) -1.\]
    That is, $ \Delta_{{G}} +\Delta_{{H}} =0$, 
   which is a contradiction. So, $ {G} \otimes {H} $ is not isomorphic to ${G} \boxtimes {H}$.
   
\end{proof}

\subsection*{$\blacksquare$ Kronecker and Lexicographic Products of Graphs}
\begin{theorem}\label{kron-lexi}
   Let ${G}$ and ${H}$ be connected graphs, each with at least two vertices. Then ${G}\otimes {H}$ is not isomorphic to ${G} \circ {H}$.
\end{theorem}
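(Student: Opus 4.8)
The plan is to follow the same degree-based strategy used for the preceding products and compare the maximum vertex degrees of the two product graphs, since the maximum degree is an isomorphism invariant. By Theorem \ref{degProd}, the maximum degree of $G \otimes H$ is $\Delta_{G}\Delta_{H}$, while the maximum degree of $G \circ H$ is $|V(H)|\,\Delta_{G} + \Delta_{H}$. It therefore suffices to show that these two quantities can never coincide under the stated hypotheses.

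First I would record the elementary bounds furnished by the hypotheses. Since $G$ and $H$ are connected with at least two vertices, every vertex has degree at least one, so $\Delta_{G}\ge 1$ and $\Delta_{H}\ge 1$. The extra ingredient, which was not needed in the earlier proofs, is that $H$ is a simple graph on $m := |V(H)|$ vertices, so $\Delta_{H}\le m-1$; in particular $m-\Delta_{H}\ge 1$.

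Next I would assume for contradiction that $G \otimes H \cong G \circ H$, which forces the equality $\Delta_{G}\Delta_{H} = m\Delta_{G}+\Delta_{H}$. Rather than solving this equation, I would estimate the difference
\[
(m\Delta_{G}+\Delta_{H}) - \Delta_{G}\Delta_{H} = \Delta_{G}(m-\Delta_{H}) + \Delta_{H} \ge 1\cdot 1 + 1 = 2 > 0,
\]
using $\Delta_{G}\ge 1$, $m-\Delta_{H}\ge 1$, and $\Delta_{H}\ge 1$. Hence $\Delta_{G\circ H} > \Delta_{G\otimes H}$ strictly, contradicting the assumed equality of maximum degrees. Equivalently, one could rewrite the equation as $\Delta_{H}(\Delta_{G}-1)=m\Delta_{G}$ and observe that $\Delta_{G}=1$ forces $m=0$, while $\Delta_{G}\ge 2$ forces $\Delta_{H}=m\Delta_{G}/(\Delta_{G}-1)>m$, both of which are impossible.

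The argument presents no serious obstacle. The only point requiring care, and the reason the maximum-degree equation does not collapse as cleanly as in the Cartesian-versus-lexicographic case, is that one must invoke the bound $\Delta_{H}\le |V(H)|-1$ to separate the two degrees, rather than reading off a contradiction from the algebra alone.
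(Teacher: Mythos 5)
Your proof is correct and follows essentially the same route as the paper: both compare the maximum degrees $\Delta_{G}\Delta_{H}$ and $|V(H)|\Delta_{G}+\Delta_{H}$ and derive a contradiction from the bound $\Delta_{H}<|V(H)|$, the paper by factoring the equality as $\Delta_{G}(\Delta_{H}-|V(H)|)=\Delta_{H}$ and you by estimating the difference directly. The rearrangement of the algebra is immaterial; the key ingredient is identical.
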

\begin{proof}
The maximum degree of  ${G} \otimes {H}$ is $\Delta_{G}\cdot \Delta_{H}$, and the maximum degree of ${G} \circ {H}$ is $ \vert V({H})\vert\Delta_{G} +\Delta_{H}.$ Suppose that $ {G} \otimes {H} $ is isomorphic to ${G} \circ {H}$. Then, 
   \[  \Delta_{G}\cdot \Delta_{H} = \vert V({H}) \vert \Delta_{{G}} + \Delta_{{H}}.\] That is, 
    \[ \Delta_{{G}}\cdot(\Delta_{{H}}-\vert V({H}) \vert) =\Delta_{{H}}\]
    Since $(\Delta_{{H}}-\vert V({H}) \vert)<0 $, we have $\Delta_{{G}}\cdot(\Delta_{{H}}-\vert V({H}) \vert)<0$, which implies that $\Delta_{{H}} <0.$ This is a contradiction. Thus, $ {G} \otimes {H} $ is not isomorphic to ${G} \circ {H}$.
   
\end{proof}

\subsection*{$\blacksquare$ Strong and Lexicographic Products of Graphs}
\begin{theorem}\label{lexi-strong}
    Let ${G}$ and ${H}$ be connected graphs, each with at least two vertices. Then, the strong product ${G}\boxtimes {H}$ is isomorphic to the lexicographic product ${G} \circ {H}$ if and only if ${H}$ is a complete graph.
\end{theorem}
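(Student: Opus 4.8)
The plan is to treat the two directions quite differently: the ``if'' direction follows from the observation that the two products literally coincide as graphs when $H$ is complete, while the ``only if'' direction is most cleanly obtained not from the maximum-degree comparison used in the preceding theorems, but from a global count of edges.

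For the ``if'' direction I would argue that, comparing the defining adjacency rules of $\boxtimes$ and $\circ$, the only pairs of distinct vertices on which they can possibly disagree are those of the form $(x_i,y_j),(x_r,y_s)$ with $x_i \sim x_r$ in $G$ and $y_j \neq y_s$: for such a pair the lexicographic product always creates an edge, whereas the strong product creates one exactly when $y_j \sim y_s$ in $H$. On every other type of pair (equal first coordinate, equal second coordinate, or non-adjacent first coordinates) the two rules agree verbatim. Hence if $H$ is complete, so that $y_j \neq y_s$ already forces $y_j \sim y_s$, the edge sets coincide and the identity map is an isomorphism.

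For the converse I would exploit that isomorphic graphs have equally many edges. Writing $n=|V(G)|$ and $m=|V(H)|$, I would count the edges of each product by classifying them according to the behaviour of the two coordinates, obtaining
\[
|E(G \boxtimes H)| = n\,|E(H)| + m\,|E(G)| + 2\,|E(G)|\,|E(H)|, \qquad
|E(G \circ H)| = n\,|E(H)| + m^{2}\,|E(G)|.
\]
Equating these and cancelling the common term $n|E(H)|$ leaves $m\,|E(G)| + 2\,|E(G)|\,|E(H)| = m^{2}\,|E(G)|$. Since $G$ is connected on at least two vertices, $|E(G)|\neq 0$, so I may divide it out to obtain $2|E(H)| = m^{2}-m$, that is $|E(H)| = \binom{m}{2}$. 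As a simple graph on $m$ vertices attains $\binom{m}{2}$ edges only when every pair of vertices is adjacent, this forces $H=K_m$, completing the argument.

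The step I expect to be the crux is recognising that the maximum-degree argument, which settles all the other product comparisons in this section, is simply not sharp enough here. Equating the maximum degrees $(\Delta_G+1)(\Delta_H+1)-1$ and $m\Delta_G+\Delta_H$ only yields $\Delta_H=m-1$, i.e.\ that $H$ has a vertex adjacent to all the others; this is satisfied by many non-complete graphs, such as stars. The essential move is therefore to replace the degree invariant by the global edge count, which is precisely strong enough to detect completeness. Once that is recognised, verifying the two edge-count formulas is a routine case analysis and no further difficulty arises.
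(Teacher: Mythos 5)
Your proof is correct, and your ``if'' direction coincides with the paper's (both show the identity map on $V(G)\times V(H)$ is an isomorphism once completeness of $H$ collapses the only case where the two adjacency rules can differ). For the converse, however, you take a genuinely different route, and your meta-remark about the paper's method is only half right: you correctly observe that equating \emph{maximum} degrees, $(\Delta_G+1)(\Delta_H+1)-1 = |V(H)|\Delta_G+\Delta_H$, merely yields $\Delta_H=|V(H)|-1$, which stars and many other non-complete graphs satisfy. But the paper does not use the maximum degree here; it switches to the \emph{minimum} degree, equating $(\delta_G+1)(\delta_H+1)-1$ with $|V(H)|\delta_G+\delta_H$ to get $\delta_G(\delta_H+1-|V(H)|)=0$, whence $\delta_H=|V(H)|-1$ since $\delta_G\ge 1$ for a connected graph on at least two vertices. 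Unlike the condition $\Delta_H=|V(H)|-1$, the condition $\delta_H=|V(H)|-1$ forces \emph{every} vertex of $H$ to have full degree, i.e.\ $H$ complete, so the degree invariant is sharp enough after all --- one just has to take minima rather than maxima. Your alternative, equating the edge counts
\[
n|E(H)|+m|E(G)|+2|E(G)||E(H)| \quad\text{and}\quad n|E(H)|+m^{2}|E(G)|
\]
and dividing by $|E(G)|\ge 1$ to get $|E(H)|=\binom{m}{2}$, is equally valid and equally elementary (both formulas are easy to verify by splitting the edge set according to whether the first coordinates are equal or adjacent); its one advantage is robustness, since a global count cannot be fooled by a single vertex of large degree, while the paper's argument is shorter once the min-versus-max subtlety is noticed.
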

 
\begin{proof}
    Let ${H}$ be the complete graph $ K_{m}$ for some $m \in \mathbb{N}$. Define the function $f:V({G} \boxtimes K_{m})\rightarrow V({G}\circ K_{m}) $ as follows:  \[f(i, j)=(i, j) ~\mbox{for} ~(i, j)\in V({G}) \times V(K_{m});~ 1 \leq i \leq n ~\mbox{and}~ 1\leq j \leq m.\]
    
    Now, let us show that $f$ preserves the adjacency relation. Suppose \[(i, j) \sim (r, s)  ~\mbox{in}~ {G} \boxtimes K_{m} ~\mbox{for some}~ 1 \leq i, r \leq n ~\mbox{and}~ 1\leq j, s \leq m. \] 
    
    Then, by the definition of the strong product $\boxtimes$, we have one of the following:  \begin{align*}
     ~ i\sim r~\mbox{in}~{G} ~\mbox{and}~ j\sim s ~\mbox{in}~K_m, ~ \mbox{or,}~ i\sim r~\mbox{in}~{G} ~\mbox{and} ~j= s, ~\mbox{or,}~ i=r~ \mbox{and}~ j\sim s~\mbox{in}~K_m.
\end{align*}
This implies that,
\[   ~ i\sim r ~\mbox{in}~{G}~\mbox{and}~ (j\sim s~\mbox{in}~K_m ~ \mbox{or} ~j= s), ~\mbox{or},~ i=r~ \mbox{and}~ j\sim s ~\mbox{in}~K_m. \]
That is, \[   ~ i\sim r~\mbox{in}~{G} , ~\mbox{or},~ i=r~ \mbox{and}~ j\sim s ~\mbox{in}~K_m. \]

But this is precisely the definition of adjacency in the lexicographic product ${G} \circ K_m$. Hence, $G\boxtimes H$ is isomorphic to ${G} \circ {H},$ if $H$ is a complete graph. 

   Next, let us prove the converse. Let ${H}$ be a graph that is \emph{not} complete. Let $\delta_{{G}}$ and $\delta_{{H}}$ denote the minimum vertex degrees of the graphs ${G}$ and ${H}$, respectively. 
  By Theorem~\ref{degProd}, the minimum degree of the strong product ${G} \boxtimes {H}$ is given by
\[
\delta({G} \boxtimes {H}) = (\delta_{{G}} + 1)(\delta_{{H}} + 1) - 1,
\]
and the minimum degree of the composition (lexicographic product) ${G} \circ {H}$ is
\[
\delta({G} \circ {H}) = |V(H)|\delta_{{G}} + \delta_{H}.
\] Suppose that the graphs $ {G} \boxtimes {H} $ and ${G} \circ {H}$ are isomorphic. Then their minimum degrees must be the same. That is
   \[  (\delta_{{G}}+1)(\delta_{{H}}+1)-1 =\vert V({H}) \vert \delta_{{G}} + \delta_{{H}}. \]
After simplifying, we get    \[  \delta_{{G}}.(\delta_{{H}}+1- m) =0. \]
    Now, since $\delta_{{G}}\neq 0 $ and $ \delta_{{H}}\neq m-1 $, the above gives a contradiction. Thus, $ {G} \boxtimes {H} $ is not isomorphic to ${G} \circ {H}$, when ${H}$ is not complete. 
   
\end{proof}


 \section*{Acknowledgments}
  M. Rajesh Kannan acknowledges financial support from the ANRF-CRG India and  SRC, IIT Hyderabad. 

\section*{Declarations}
\textbf{Conflict of interest:} The authors state that they possess no conflicts of interest.

\bibliographystyle{amsplain}
\bibliography{Ref}

\end{document}